\newtheorem{thm}{Theorem}
\newtheorem{lemma}{Lemma}
\theoremstyle{definition}
\theoremstyle{remark}
\newtheorem{remark}{Remark}
\numberwithin{equation}{section}
\def\R{\mathbb{R}}
\def\E{\mathbb{E}}
\def\SO{\mathrm{SO}}
\def\P{\mathrm{P}}
\def\R{\mathbb{R}}
\def\P{\mathbb{E}(2,1)}
\def\p{\mathfrak{p}}
\def\FR{\mathbf{F}}
\def\k{\kappa}
\def\Re{\mathbb{R}^{2,1}}
\begin{document}

\title[Hamiltonian flows on null curves]{Hamiltonian flows on null curves}

\author{Emilio Musso}
\address{(E. Musso) Dipartimento di Matematica, Politecnico di Torino,
Corso Duca degli Abruzzi 24, I-10129 Torino, Italy}
\email{emilio.musso@polito.it}

\author{Lorenzo Nicolodi}
\address{(L. Nicolodi) Di\-par\-ti\-men\-to di Ma\-te\-ma\-ti\-ca,
Uni\-ver\-si\-t\`a degli Studi di Parma, Viale G. P. Usberti 53/A,
I-43100 Parma, Italy}
\email{lorenzo.nicolodi@unipr.it}

\thanks{Authors partially supported by MIUR projects:
\textit{Metriche riemanniane e variet\`a differenziabili} (E.M.);
\textit{Propriet\`a geometriche delle variet\`a reali e complesse}
(L.N.); and by the GNSAGA of INDAM}

\subjclass[2000]{37K25; 37K10; 53A55}



\keywords{Local motion of null curves; integrable equations; KdV
hierarchy; null worldlines; geometric variational problems}

\begin{abstract}
The local motion of a null curve in Minkowski 3-space induces an
evolution equation for its Lorentz invariant curvature. Special
motions are constructed whose induced evolution equations are the
members of the KdV hierarchy. The null curves which move under the
KdV flow without changing shape are proven to be the trajectories of
a certain particle model on null curves described by a Lagrangian
linear in the curvature. In addition, we show that the curvature of
a null curve which evolves by similarities can be computed in terms
of the solutions of the second Painlev\'e equation.
\end{abstract}

\maketitle

\section{Introduction}\label{s:intro}

Many completely integrable nonlinear PDE (soliton equations)
describe the evolution of differential invariants associated with
curves moving in a homogeneous space (\cite{CIB-PhD}, \cite{CQ1},
\cite{CQ2}, \cite{DS}, \cite{GP}, \cite{H}, \cite{Ivey-cpm},
\cite{La2}, \cite{LP}, \cite{NSW}, \cite{Pi}). Some of these motions share the
property that the curves which evolve by congruences of the ambient
space have both a variational and a Hamiltonian description: as
extremals of a geometric variational problem defined by the
conserved integrals of the corresponding soliton equation, and as
solutions of an integrable (finite dimensional) contact Hamiltonian
system. Examples include
the modified Korteweg-de Vries (mKdV) equation
describing the (geodesic) curvature
evolution induced by a local motion of curves in
2-dimensional space-forms (\cite{GP}, \cite{M-re-torino}).
See also \cite{HS-PAMS02}, \cite{LS-siam}, \cite{LS3} for other examples.

\vskip0.1cm

In this paper, we investigate the local motion of null
curves in Minkowski 3-space and find local motions inducing the Korteweg-de
Vries (KdV) hierarchy of equations.
(The motion of spacelike and timelike
curves in Minkowski 3-space has recently been related to integrable equations
from the AKNS hierarchy in \cite{DI}.)
Our approach is similar in spirit to that used in \cite{GP},
where integrable equations
from the mKdV hierarchy are related to local motions of curves in the plane.
We then provide a variational description of null curves which move
by Lorentzian rigid motions under the KdV flow and show that they solve a finite
dimensional integrable Hamiltonian system. In this regard, motivations are provided
by recent studies on relativistic particle models on
null curves (\cite{FGL}, \cite{GM}, \cite{KP}, \cite{NMMK}).

\vskip0.1cm

Let $\Re$ be Minkowski 3-space and $\gamma \subset \Re$ a null curve
parametrized by the natural (pseudo-arc) parameter $s$ which
normalizes the derivative of its tangent vector field. It is known
that in general $\gamma$ is uniquely determined up to Lorentz
transformations by a Lorentzian invariant function
$\kappa_\gamma(s)$, called the curvature of $\gamma$ (\cite{BO},
\cite{Cas}, \cite{FGL}, \cite{In-Lee}).
We show (cf. Theorem \ref{thm:local-vf})
that the local motion of a null curve in Minkowski 3-space induces a
\textit{local} evolution equation for its curvature of the form
\begin{equation}\label{k-ev-eq}
   \frac{\partial\kappa}{\partial t} = \mathcal D D^{-1}\mathcal D \p[\kappa],
    \end{equation}
where $D$ is the total derivative operator with respect to $s$,
$\mathcal D = D^3 + 4\kappa D + 2\kappa_{s}$, and $\p[\kappa] =
\p(\kappa, \kappa_s, \dots)$ is a differential polynomial such that
$\kappa_s \p$ is a total derivative.\footnote{The study can actually be extended to
null curves in 3-dimensional Lorentzian space forms.}
Interestingly, the right hand
side of \eqref{k-ev-eq} is expressed in terms of the operators which
define the bi-Hamiltonian structure of the KdV equation (\cite{O2}).
For a particular sequence of differential functions $\p$, we show
that \eqref{k-ev-eq} coincide with the equations of the KdV
hierarchy, hence providing a new geometric interpretation of the KdV
flows (cf. Theorem \ref{thm:kdv hier}).

\vskip0.1cm

We then discuss the motion of null curves corresponding
to the traveling wave solutions of the KdV equation.
Such curves evolve under the KdV flow
by Lorentzian rigid motions, retaining their shape.
Theorem \ref{thm:cong} shows that they coincide with the critical points of the variational
problem on
null curves defined by the Lorentz invariant functional
\begin{equation}\label{02}
  \mathcal L_\lambda(\gamma) =\int_\gamma{(2\kappa_\gamma + \lambda)}ds,
   \quad \lambda\in \R.
   \end{equation}
Functionals of this type have been considered in the
literature as action functionals of natural geometrical particle models for null
trajectories in 3 and 4-dimensional spacetimes of constant
curvature (cf. \cite{CGR}, \cite{FGL}, \cite{KP},
\cite{NFS2}, \cite{NMMK}, \cite{P}, and the
references there). The integration of the worldlines
can be achieved by quadratures and the explicit formulae of the
natural parametrizations can be given in terms of
Weierstrass elliptic functions (\cite{GM}).
The main point in
the integration of the extremal curves is the existence of a Lax pair
encoding the Euler--Lagrange equations of
(\ref{02}). Remarkably, the Lax pair can be directly deduced
from the invariance of the trajectory with respect to the KdV dynamics.
Finally, in Theorem \ref{thm:sim}, we show that if the shape of a null curve
evolves by similarities under
the KdV flow then the curvature function can be integrated in terms of the solutions
of the second Painlev\'e equation. This provides a geometric
interpretation of the similarity reduction of the KdV equation
investigated in \cite{JO}.

\section{Preliminaries}\label{s:pre}

In this section we summarize some background material, referring to
\cite{O2}, \cite{O3} for a complete exposition.

Let $J_h (\R,\R) = \R \times \R^{h+1}$ denote the space of $h^{th}$ order
jets of smooth $\R$-valued functions $u$ of one independent variable $s$
with coordinates
$s$, $u_{(0)}$, $u_{(1)}$, $\dots$, $u_{(h)}$. The jet space $J_h (\R,\R)$
is endowed with the contact system
generated by the 1-forms
\[
 \zeta_j = du_{(j-1)} -u_{(j)}ds, \quad j= 1, \dots, h,
  \]
and independent condition $ds$.
The projective limit of the sequence
\begin{equation}\label{2.1.1}
 \cdots \to J_{h}(\R,\R)\to J_{h-1}(\R,\R)\to \cdots \to
  J_1(\R,\R)\to J_0(\R,\R)
   \end{equation}
is the {\it total jet space} of $\R$-valued smooth functions of one
independent variable. It is denoted by ${J}(\R,\R)$.
If $u:I\subset \R\to \R$ is a smooth function, its {\it prolongation of order}
$h$, $j_h(u): I \to J_h(\R,\R)$,  is the integral curve of the contact system
given by
$$
 j_h(u):   s \mapsto
  \left(s,u|_s,\frac{d u}{ds}\vert_s, \dots,\frac{d^h u}{ds^h}|_s \right).
  $$

A smooth map $\mathfrak w : J(\R,\R) \to \R$ is said a \textit{polynomial differential
function} (differential polynomial) of order $h$
if there exists a polynomial $w\in \R[x_0, \dots,x_h]$ such that
$$
   \mathfrak{w}(\mathbf{u})=w(u_{(0)},u_{(1)},\dots,u_{(h)}),
     $$
for each $\mathbf{u} = (s, u_{(0)}, u_{(1)}, \dots,u_{(h)}, \dots) \in J(\R,\R)$.
For a polynomial differential function $\mathfrak w$,
we will write $\mathfrak w[u]$ to remind that $\mathfrak w$ depends on $u$ and
the derivatives of $u$.
The algebra of polynomial differential functions, $J[\mathbf{u}]$,
is endowed with a derivation, called the {\it total derivative},
defined by
$$
 D\mathfrak w =
  \sum_{p=0}^{\infty} \frac{\partial w}{\partial u_{(p)}} u_{(p+1)}.
   $$

A differential function $\mathfrak w\in J[\mathbf{u}]$ is
a total derivative
if there exists $\mathfrak p\in J[\mathbf{u}]$ such
that $\mathfrak w=D\left(\mathfrak p\right)$.
The ``primitive'' $\mathfrak p$ is unique up to an additive constant.
By $\int \mathfrak w ds$ we denote the unique primitive of $\mathfrak w$
which vanishes at $\mathbf{u}=\mathbf{0}$.

On $J[\mathbf{u}]$ there is a natural differential operator
$E$, the Euler operator, defined by
\[
 E(\mathfrak w) =\sum _{\ell =0}^\infty (-1)^\ell D^\ell
 \left(\frac{\partial \mathfrak w}{\partial u_{(\ell)}}\right).
  \]
Note that $E(\mathfrak w)$ is the gradient of the functional
$\mathcal{W}$ defined by
\[
 \mathcal W : u \mapsto \int \mathfrak w[u]ds
  \]
and that it coincides with the variational derivative
$\frac{\delta \mathcal W}{\delta u}$.
We also recall a few more basic facts:
\begin{itemize}

\item $\mathfrak w \in J[\mathbf{u}]$ is a total derivative if and only if
$E(\mathfrak w) = 0$;

\item $u$ is an extremal of the functional $\mathcal W$ if and only if
$E(\mathfrak w)[u]=0$;

\item for each $\mathfrak w \in J[\mathbf{u}]$, $u_{(1)} E(\mathfrak w)$ is
a total derivative (cf. \cite{O3}, Theorem 7.36);

\item $\mathfrak w(u) = \int_0^1
E(\mathfrak w)[{\epsilon u}]\cdot u \,d\epsilon$. 
\end{itemize}

Next, consider the operator $\mathcal{D} : J[\mathbf{u}] \to J[\mathbf{u}]$,
defined by
\begin{equation}\label{2.2.1}
 \mathcal{D} = D^3 + 4u_{(0)}D + 2u_{(1)}.
    \end{equation}
According to \cite{GGKM} (cf. also \cite{Lx} and \cite{O2}), there exist two sequences
$\{\mathfrak g_n\}$ and $\{\mathfrak p_n\}$ of polynomial differential functions
satisfying the \textit{Lenard recursion formula}:
\begin{equation}\label{lenard-formula}
  D \mathfrak g_n = \mathcal{D} \mathfrak g_{n-1} ,
\quad  \mathfrak g_{n-1} = E (\mathfrak p_{n-1}), \quad n = 1, 2, \dots,
 \end{equation}
where
\[
  \mathfrak g_0=\frac{1}{2}, \quad \mathfrak p_0=\frac{1}{2}u_{(0)}.
   \]
The first $\mathfrak g_n$'s are:
\begin{eqnarray*}
\mathfrak g_1 = u, \quad \mathfrak g_2 =3 u^2+u_{ss} ,\quad
\mathfrak g_3 = 10 u^3+10 u_{ss} u+5 u_s^2+u_{ssss}.
\end{eqnarray*}

The Lenard recursion formula leads to the two Hamiltonian
representations of the KdV hierarchy of integrable evolution equations, namely
\begin{equation}\label{KdV-hierarchy}
  u_t = - D \mathfrak g_n [u] = - \mathcal{D} \mathfrak g_{n-1}[u],
   \quad n = 1, 2, \dots,
   \end{equation}
where the right hand sides depend only on $u$ and its derivatives
with respect to $s$.
The first member of the hierarchy is the wave equation
$u_{t} + u_{s}=0$, while the second is the Korteweg--de Vries equation
in the form
$$u_{t} = - 6 u u_{s} - u_{sss}.$$
The next member is the fifth order equation
$$
  u_{t} = -30 u_{s} u^2 - 10 u_{sss} u - 20 u_{s} u_{ss}-u_{5s}.
   $$

\begin{remark}
The $\mathfrak g_n$'s are the gradients of the Gardner--Kruskal--Miura \cite{GGKM}
sequence of conserved functionals
$$
  \mathcal P_n (u) = \int  \mathfrak p_n [u] ds
   $$
of the KdV hierarchy. The first three of them are:
\begin{eqnarray*}
\mathcal P_0 = \int \frac{u}{2} ds, \quad \mathcal P_1 = \int \frac{u^2}{2} ds ,\quad
\mathcal P_2 = \int \left(u^3 - \frac{u_s^2}{2} \right) ds.
\end{eqnarray*}
\end{remark}

\section{Null curves in Minkowski 3-space}

\subsection{Null curves and frames}

Let $\Re$ denote affine Minkowski 3-space with the Lorentzian inner
product
$$
  \langle \mathbf{x},\mathbf{y}\rangle = -(x^1y^3+x^3y^1) + x^2y^2 = x^ig_{ij}y^j.
    $$
We fix an orientation on $\Re$ by requiring that the standard basis
$(\mathbf{e}_1,\mathbf{e}_2,\mathbf{e}_3)$ is positive,
and fix a time-orientation by saying that a timelike or null (lightlike) vector
$\mathbf{x}\neq 0$ is future-directed if $\langle
\mathbf{x},\mathbf{e}_1+\mathbf{e}_3 \rangle < 0$. Let $\P = \Re \rtimes \SO_0(2,1)$
denote the restricted Poincar\'e group, i.e. the group of isometries of
$\Re$ preserving the given orientations. The elements of $\P$ can be
viewed as affine frames $(\mathbf x,\mathbf{a})$ consisting of a point
$\mathbf x\in \Re$ and a positive basis
$\mathbf{a}=(\mathbf{a}_1,\mathbf{a}_2,\mathbf{a}_3)$ such that
$\mathbf{a}_1,\mathbf{a}_3$ are future-directed null vectors and
\begin{equation}\label{o.n.conditions}
 \langle \mathbf{a}_i,\mathbf{a}_j\rangle = g_{ij},
  \quad i,j\in \{1,2,3\}.
    \end{equation}
We will think of $\P$ as the closed subgroup of $\mathrm{GL}(4,\R)$
whose elements are of the form
$$
 \mathbf{X}(\mathbf x,\mathbf{a})
  =\left(
     \begin{array}{cc}
            1 & 0 \\
         \mathbf x & \mathbf{a} \\
          \end{array}
           \right),
       $$
where $(\mathbf x,\mathbf{a})\in \P$. Correspondingly, the Lie algebra
$\mathfrak e(2,1)$ of $\P$ is the subalgebra of $\mathfrak{gl}(4,\R)$ of all
$4\times 4$ matrices of the form
$$
 X(q,v)=
  \begin{pmatrix}
   0 & 0 & 0 & 0\\
    q^1 & v^2 & v^3 & 0\\
     q^2 & v^1 & 0 & v^3\\
      q^3 & 0 & v^1 & -v^2
       \end{pmatrix}.
        $$

A {\it null curve} in $\Re$ is a regular, smooth parametrized curve
$\gamma : I\to \Re$, defined on some interval $I\subset \R$, such
that the velocity vector $\gamma'(t)$ is a future-directed null
vector, for each $t\in I$. Since $\gamma'(t)$ is null, $\langle
\gamma'(t), \gamma''(t) \rangle = 0$, and $\gamma''$ has to be
spacelike or proportional to $\gamma'$. Away from flex
points,\footnote{$\gamma(t)$ is a flex point if $\gamma'(t)\wedge
\gamma''(t)= 0$.} the differential 1-form
$$
  \omega_\gamma=\|\gamma''(t)\|^{1/2}dt
   $$
is never zero, and is invariant under changes of parameter and the
action of the Poincar\'e group. The integral of $\omega_\gamma$ can
then be used to introduce a {\it natural parameter} (or
\textit{pseudo-arc parameter}) $s$, intrinsically defined by
$\gamma$, such that
$$
  \|\gamma''(s)\|^{1/2}=1.
   $$
The natural parameter $s$ is fixed up to an additive constant. From
now on we will consider null curves without flex points,
parametrized by the natural parameter.

To any null curve $\gamma(s)$, we associate the \textit{Frenet frame}
\begin{equation}
 \mathbf{F} =(\gamma;\mathbf{t},\mathbf{n},\mathbf{b}) :I\to \P,
   \end{equation}
defined by
$$
  \mathbf{t}(s)=\gamma'(s),\quad \mathbf{n}(s)=
   \gamma''(s),\quad \mathbf{b}(s)
    =\gamma'''(s) + \frac{1}{2}\|\gamma'''(s)\|^2\gamma'(s),
    $$
for every $s\in I$.
The orthonormality conditions \eqref{o.n.conditions} are readily
verified by observing that the derivative of $\langle \gamma', \gamma''\rangle =0$
yields $\langle \gamma', \gamma'''\rangle = -1$.
The Frenet frame satisfies the {\it Frenet--Serret system}
 \begin{equation}\label{FS}
    \gamma'=\mathbf{t},\quad \mathbf{t}'=\mathbf{n},
    \quad \mathbf{n}'=2\k \mathbf{t}+\mathbf{b},\quad \mathbf{b}'=-2\k \mathbf{n},
      \end{equation}
 where the function
$$
  \k(s) := \frac{1}{4}\|\gamma'''(s)\|^2, \quad   s\in I,
    $$
is called the \textit{curvature} of $\gamma$.
Equivalently, \eqref{FS} can be written in the form
\begin{equation}
 \mathbf F' = \mathbf F \left(
          \begin{array}{cccc}
            0 & 0 & 0 & 0 \\
            1 & 0 & -2\k & 0 \\
            0 & 1 & 0 & -2\k \\
            0 & 0 & 1 & 0 \\
          \end{array}
        \right).\label{MCF}
    \end{equation}

\begin{remark}
Let $M = J_3^0(\R,\Re)$ be the space of third order jets of null curves
$\gamma : \R\to \Re$ parametrized by the natural parameter.
The space $M$ is the $\P$-invariant submanifold of
$J_3(\R,\Re)$ defined by
$$
 \dot{\mathbf{x}}\in \mathcal{L}^+,\quad  \|\ddot{\mathbf{x}}\|=1,
  \quad \langle \dot{\mathbf{x}},\ddot{\mathbf{x}}\rangle
    =\langle \ddot{\mathbf{x}},\dddot{\mathbf{x}}\rangle=0,
    \quad \dot{\mathbf{x}}\wedge \ddot{\mathbf{x}}\wedge \dddot{\mathbf{x}}\neq 0,
      $$
where $\mathcal{L}^+\subset \Re$ is the future-directed lightcone. Then
$$
 \rho : M \ni (P,\dot{\mathbf{x}},\ddot{\mathbf{x}},
  \dddot{\mathbf{x}}) \mapsto \left(P;\dot{\mathbf{x}},
   \ddot{\mathbf{x}},\dddot{\mathbf{x}}
    +\frac{1}{2}\|\dddot{\mathbf{x}}\|^2\dot{\mathbf{x}}\right)\in \P
       $$
is a moving frame map in the sense of Fels--Olver \cite{FO}.
Note that the Frenet frame along a null curve $\gamma$ is given
by $\mathbf{F}=\rho\circ j_3(\gamma)$.
\end{remark}

\subsection{Tangent vectors}

Let $\mathcal M$ denote the space of null curves in $\Re$ without
flex points, parametrized by the natural parameter $s$, and
complete, i.e. defined on all $\R$.

\begin{lemma}\label{l:tangent-vector}
Let
\[
  V_\gamma = p_1 \mathbf{t}
   + p_2 \mathbf{n} + p _3\mathbf{b}
    \]
be a vector field along $\gamma \in \mathcal M$. Then $V_\gamma$ is tangent
to $\mathcal M$ if and only if
\begin{equation}\label{tangent-vector}
\left\{
 \begin{array}{l}
  \displaystyle p_2  =  - p_3',\\
   \displaystyle p_1  = \frac{1}{2}p_3''+ \int_0^s
    \kappa' (u) p_3 (u) du + \mathrm{cost}.
     \end{array}
       \right.
         \end{equation}
\end{lemma}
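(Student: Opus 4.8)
The plan is to realize $\mathcal M$ as the zero set of the two scalar constraints that single out null curves in the natural parameter,
\[
 \langle \gamma', \gamma'\rangle = 0, \qquad \langle \gamma'', \gamma''\rangle = 1,
\]
and to read off the tangent space by linearizing them. Concretely, $V_\gamma$ is tangent to $\mathcal M$ exactly when it is the initial velocity $\partial_\epsilon\gamma_\epsilon|_{\epsilon=0}$ of a path $\epsilon\mapsto\gamma_\epsilon\in\mathcal M$; since $\partial_\epsilon\gamma_\epsilon' = V_\gamma'$ and $\partial_\epsilon\gamma_\epsilon'' = V_\gamma''$, differentiating the two constraints at $\epsilon=0$ shows that this forces
\[
 \langle \mathbf t, V_\gamma'\rangle = 0, \qquad \langle \mathbf n, V_\gamma''\rangle = 0 .
\]
Thus the whole statement reduces to expressing these two inner products through $p_1,p_2,p_3$, and conversely to showing that these first-order conditions suffice.

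For necessity I would compute $V_\gamma'$ and $V_\gamma''$ by differentiating $V_\gamma = p_1\mathbf t + p_2\mathbf n + p_3\mathbf b$ and substituting the Frenet--Serret equations \eqref{MCF}, each time collecting the coefficients of $\mathbf t,\mathbf n,\mathbf b$. Contracting with the frame and using the inner products \eqref{o.n.conditions} (so that $\langle\mathbf t,\mathbf b\rangle=-1$, $\langle\mathbf n,\mathbf n\rangle=1$, and every other pairing among $\mathbf t,\mathbf n,\mathbf b$ vanishes), the first condition isolates the $\mathbf b$-coefficient of $V_\gamma'$ and gives the single relation $p_2+p_3'=0$, i.e. $p_2=-p_3'$. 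Feeding $p_2=-p_3'$ into the $\mathbf n$-coefficient of $V_\gamma''$, the second condition becomes the first-order equation $2p_1' = p_3''' + 2\kappa' p_3$; integrating from $0$ to $s$ produces $p_1 = \tfrac12 p_3'' + \int_0^s \kappa'(u)p_3(u)\,du + \mathrm{cost}$, which is precisely the second line of \eqref{tangent-vector}.

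For sufficiency I would run this backwards: starting from an arbitrary smooth $p_3$ and an arbitrary constant, define $p_1,p_2$ by \eqref{tangent-vector}; by construction the associated $V_\gamma$ annihilates the linearizations of both constraints. What remains is to upgrade this infinitesimal admissibility to an honest path in $\mathcal M$, and here I would pass to the moving-frame picture of the Remark. Writing $\Omega$ for the Maurer--Cartan form in \eqref{MCF} and $\Xi = \mathbf F^{-1}\partial_\epsilon\mathbf F$ for the $\mathfrak e(2,1)$-valued generator of the variation, whose translational part is $(p_1,p_2,p_3)$, the structure equation $\partial_\epsilon\Omega - \partial_s\Xi = [\Omega,\Xi]$ determines the rotational part of $\Xi$ and forces $\partial_\epsilon\Omega$ to keep the admissible shape \eqref{MCF} (only the $\kappa$-entries moving) exactly when the two relations in \eqref{tangent-vector} hold; integrating $\partial_s\mathbf F_\epsilon = \mathbf F_\epsilon\,\Omega_\epsilon$ then yields a genuine family $\gamma_\epsilon\in\mathcal M$ with $\partial_\epsilon\gamma_\epsilon|_0 = V_\gamma$.

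I expect the algebra in the necessity step to be routine but genuinely sign-sensitive: the absence of a $\kappa p_3'$ term in the final formula for $p_1$ hinges on using the coefficient of $\mathbf t$ in $\mathbf n'$ with the sign for which the pairings \eqref{o.n.conditions} are preserved, so I would double-check $\mathbf n'$ against $\langle\mathbf n,\mathbf b\rangle' = 0$ before trusting the result. The real obstacle is the realization step: one must verify that the rotational part of $\Xi$ can indeed be chosen so that $\partial_\epsilon\Omega$ has no components outside the one-parameter family \eqref{MCF}, i.e. that the two conditions \eqref{tangent-vector} are not merely necessary but are the full integrability conditions for the structure equation. If one prefers to avoid the frame, the same conclusion follows from the implicit function theorem once the constraint map $\gamma\mapsto(\langle\gamma',\gamma'\rangle,\langle\gamma'',\gamma''\rangle-1)$ is checked to be a submersion, whose kernel is then parametrized exactly by $(p_3,\mathrm{cost})$.
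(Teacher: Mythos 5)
Your proof is correct, and its necessity half genuinely differs from the paper's. The paper derives both relations by lifting the variation to the Frenet frame and expanding the full Maurer--Cartan equation $\partial_t K-\partial_s P=[K,P]$, which produces \eqref{tangent-vector} together with the rotational coefficients $p_4,p_5,p_6$ and the curvature evolution \eqref{kappa-t} in one stroke; in that scheme the formula for $p_1$ actually falls out of the \emph{rotational} block (from $p_5=-p_4'$ combined with the translational relations), not from the translational block alone. You instead linearize the two scalar constraints $\langle\gamma',\gamma'\rangle=0$ and $\langle\gamma'',\gamma''\rangle=1$ and contract with the frame, obtaining \eqref{tangent-vector} without ever introducing $p_4,p_5,p_6$ --- a leaner route to the lemma as stated, though the paper needs those extra coefficients anyway for Theorem \ref{thm:local-vf}. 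Your sign caution is well placed and worth making explicit: the displayed system \eqref{FS} carries a typo, since $\mathbf n'=2\kappa\mathbf t+\mathbf b$ is incompatible both with $\langle\mathbf n,\mathbf b\rangle'=0$ and with $\mathbf n'=\gamma'''=\mathbf b-2\kappa\mathbf t$ coming from the definition of $\mathbf b$; with the metric-compatible sign, which is the one encoded in \eqref{MCF}, your contraction gives exactly $\langle\mathbf n,V_\gamma''\rangle=2p_1'-p_3'''-2\kappa' p_3$ and hence the stated expression for $p_1$, whereas the sign printed in \eqref{FS} would produce a spurious $4\kappa p_3'$ term. For sufficiency you fall back on essentially the paper's own construction (determine the rotational part of the variation from the structure equation and integrate by Cartan--Darboux), so nothing new needs checking there; the implicit-function-theorem alternative you mention would require care in the Fr\'echet setting and is rightly left as a side remark rather than the main argument.
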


\begin{proof}
Let $\Gamma(s,t)$ be a variation of $\gamma(s)$ through null curves
and assume that $\Gamma(s,0) = \gamma(s)$. Let
$$
 \FR(\cdot,t) =
  (\Gamma;\mathbf{t}_\Gamma,\mathbf{n}_\Gamma,\mathbf{b}_\Gamma)(\cdot,t) : \R \to\P
  $$
denote the Frenet frame for each null curve $\Gamma(\cdot,t)$.
Let
$\k_\Gamma(\cdot,t)$
be the curvature function of $\Gamma(\cdot, t)$.
If we set $\Theta =\FR^{-1} d\FR$, then
\begin{equation}\label{CE0}
 \Theta = K(s,t)ds+ P(s,t)dt
  \end{equation}
for $\mathfrak e(2,1)$-valued functions
$$
 K =\left(
          \begin{array}{cccc}
            0 & 0 & 0 & 0 \\
            1 & 0 & -2\kappa_\Gamma  & 0 \\
            0 & 1 & 0 & -2\kappa_\Gamma  \\
            0 & 0 & 1 & 0 \\
          \end{array}
        \right), \quad
 P =\begin{pmatrix}
  0 & 0 & 0 & 0 \\
  p_1 & p_5 & p_6 & 0\\
    p_2 & p_4 & 0 & p_6\\
     p_3 & 0 & p_4 &-p_5
      \end{pmatrix},
$$
such that
\begin{equation}\label{MCE1}
 d\Theta +\Theta \wedge
  \Theta  = 0
   \end{equation}
is satisfied.
The equation \eqref{MCE1} can be rewritten in the form
\[
 \frac{\partial K}{\partial t}
  - \frac{\partial P}{\partial s} = \left[K ,P  \right],
  \]
which computed at $t=0$ yields
\begin{eqnarray}
  p_2 & = & - p_3', \label{p1}\\
   p_1 & = &\frac{1}{2}p_3''+ \int_0^s
    \kappa_\gamma' (u) p_3 (u) du + \text{cost}, \label{p2}\\
      p_4 &=&- p''_3 -2\kappa_\gamma p_3 + p_1, \label{p3}\\
       p_5 &= &p'_1 +2\kappa_\gamma p'_3,\label{p4}\\
        p_6 & = & p'_5 -2\kappa_\gamma p_4, \label{p5}
         \end{eqnarray}
and
\begin{equation}\label{kappa-t}
 \frac{\partial\kappa_\Gamma}{\partial t} (s,0) =  -\frac{1}{2}p'_6 -\kappa_\gamma p_5,
  \end{equation}
where $\k_\gamma = \k_\Gamma(\cdot, 0)$.
In particular, we have that the infinitesimal variation of $\Gamma(s,t)$ at $t=0$ is
\[
 \frac{\partial\Gamma}{\partial t}(s,0) = p_1 (s) \mathbf{t}_\gamma
    + p_2(s)\mathbf{n}_\gamma +p_3(s)\mathbf{b}_\gamma.
    \]

Conversely, any tangent vector arises as an infinitesimal variation.
In fact, let $V_\gamma$ be a tangent vector at $\gamma$. Let $p_4$, $p_5$
and $p_6$ be the functions determined by $p_1$, $p_2$, $p_3$ through the equations
\eqref{p1}--\eqref{p5}, and consider the function
\[
  c = -\frac{1}{2} p_6' -\kappa_\gamma p_5.
   \]
Under these hypotheses, we have that
\[
 \frac{\partial P}{\partial s} = C - \left[ K_\gamma, P\right],
   \]
where
\[
 C=\left(
          \begin{array}{cccc}
            0 & 0 & 0 & 0 \\
            0 & 0 & -2c & 0 \\
            0 & 0 & 0 & -2c \\
            0 & 0 & 0 & 0 \\
          \end{array}
        \right), \quad
K_\gamma=\left(
          \begin{array}{cccc}
            0 & 0 & 0 & 0 \\
            1 & 0 & -2\kappa_\gamma & 0 \\
            0 & 1 & 0 & -2\kappa_\gamma \\
            0 & 0 & 1 & 0 \\
          \end{array}
        \right).
\]
Next, define
\[
  K(s,t) : = K_\gamma(s)  + t C(s)
   \]
and consider the differential equation for $P = P(s,t)$ given by
\[
  \frac{\partial P}{\partial s} = C - \left[ K_\gamma, P\right] - t \left[C , P \right].
   \]
Let $P(s,t)$ be the solution with initial condition
$P(0,t) = p_0(t)$ and consider the $\mathfrak e(2,1)$-valued 1-form
\[
  K(s,t)ds + P(s,t) dt.
  \]
By construction, this form satisfies the Maurer--Cartan equation, which implies the
existence of an $\P$-valued map
$$
 \FR =
(\Gamma  ; \mathbf{t}, \mathbf{n}, \mathbf{b})
      $$
such that $\FR^{-1} \FR = K(s,t)ds + P(s,t) dt$. As a consequence,
$\Gamma$ represents a variation through null curves whose
infinitesimal variation is $(\partial\Gamma/\partial t) (s,0) =
V_\gamma(p)$.
\end{proof}

\begin{remark}
A solution of \eqref{tangent-vector} is uniquely given by
prescribing arbitrarily the function $p_3$ and a constant. This
gives a canonical trivialization of the tangent bundle $T\mathcal
M$,
$$
   T\mathcal M \simeq \mathcal M \times \R \times C^\infty(\R,\R).
    $$
\end{remark}

\section{Local motion of null curves and evolution equations}

\subsection{Local motion and vector fields}

An \textit{invariant local motion of null curves} is an integral curve
of a \textit{local vector field} on $\mathcal M$, that is a section
of $T\mathcal M$ of the form
\begin{equation}\label{local-vf}
 V : \mathcal M \ni \gamma \mapsto
  V_\gamma = \p_1[\kappa_\gamma] \mathbf t_\gamma + \p_2[\kappa_\gamma] \mathbf n_\gamma
  + \p_3[\kappa_\gamma] \mathbf b_\gamma,
    \end{equation}
where $\p_1$, $\p_2$, $\p_3$ are polynomial differential functions satisfying
\begin{equation}\label{Ep3}
 E(u_{(1)}\p_3)=0
  \end{equation}
and
\begin{equation}\label{p1-p2}
  \p_1=\frac{1}{2}D^2 \p_3+\int u_{(1)}\p_3 + \text{cost}, \quad \p_2=-D \p_3.
  \end{equation}

\begin{remark}
Note that, according to Lemma \ref{l:tangent-vector}, \eqref{Ep3} and
\eqref{p1-p2} are necessary and sufficient conditions for
$V_\gamma$ being tangent to  $\mathcal M$ at $\gamma$, for each
$\gamma \in \mathcal M$. Moreover, a local vector field is completely
determined by a differential function $\p_3$ such that $E(u_{(1)}\p_3)=0$
and a constant. Henceforth, such a constant will be assumed to be zero.
From \eqref{local-vf}, it follows that a local motion of null curves
is a solution $\Gamma(s,t)$ of the flow equation
\begin{equation}\label{flow-equation}
 \frac{\partial \Gamma}{\partial t} = \p_1[\kappa_\Gamma] \mathbf t_\Gamma
+ \p_2[\kappa_\Gamma] \mathbf n_\Gamma
  + \p_3[\kappa_\Gamma] \mathbf b_\Gamma.
   \end{equation}
\end{remark}

We can now state the following.

\begin{thm}\label{thm:local-vf}
Suppose $\Gamma(s,t)$ is the local motion associated with a local vector field
$V$. Then the evolution of the curvature is governed by
\begin{equation}\label{kappa-deltastorto-erre}
 \frac{\partial \kappa}{\partial t}
  = -\frac{1}{4}\mathcal D D^{-1} \mathcal D \p_3[\kappa],
  \end{equation}
where $\p_3$ is a polynomial differential function such that $u_{(1)}\p_3$ is
a total derivative.

Conversely, let $\kappa(s,t)$ be a solution of \eqref{kappa-deltastorto-erre}
and let $\gamma$ be the null curve
with curvature $\kappa(s,0)$. Then there exists a unique local motion $\Gamma(s,t)$
such that $\Gamma(s,0) = \gamma$ which corresponds to the
local vector field determined by $\p_3$.
\end{thm}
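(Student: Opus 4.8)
The plan is to compute the induced curvature evolution directly from the Frenet data assembled in Lemma \ref{l:tangent-vector}, and then invert the construction using the converse half of that lemma. First I would take the local vector field \eqref{local-vf} and feed its components $\p_1,\p_2,\p_3$ into the chain of relations \eqref{p1}--\eqref{p5}, reading them now as polynomial differential identities in $\kappa$ rather than pointwise relations along a fixed curve. Thus I would set $\p_4=-D^2\p_3-2u_{(0)}\p_3+\p_1$, $\p_5=D\p_1+2u_{(0)}D\p_3$, and $\p_6=D\p_5-2u_{(0)}\p_4$, and substitute the tangency conditions \eqref{p1-p2} for $\p_1$ and $\p_2$. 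The curvature law \eqref{kappa-t} then reads $\partial_t\kappa=-\tfrac12 D\p_6-u_{(0)}\p_5$, so the entire task reduces to expressing the right-hand side as a differential operator applied to $\p_3$.

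The core computation is to show that after eliminating $\p_4,\p_5,\p_6$ and $\p_1$ in favor of $\p_3$, the expression $-\tfrac12 D\p_6-u_{(0)}\p_5$ collapses to $-\tfrac14\,\mathcal D D^{-1}\mathcal D\,\p_3$, where $\mathcal D=D^3+4u_{(0)}D+2u_{(1)}$. I would proceed by back-substitution: write $\p_5$ and $\p_6$ explicitly in terms of $\p_1$ and $\p_3$, then replace $\p_1$ using \eqref{p1-p2}. The appearance of $D^{-1}$ is the delicate point, and it is precisely what the hypothesis controls: the term $\int u_{(1)}\p_3$ in $\p_1$ is the primitive of $u_{(1)}\p_3$, which exists as a polynomial differential function exactly because $E(u_{(1)}\p_3)=0$, i.e. $u_{(1)}\p_3$ is a total derivative. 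Keeping careful track of where this primitive enters, I expect the factor $\tfrac12 D^2+D^{-1}(u_{(1)}\,\cdot\,)$ in $\p_1$ to reorganize, together with the $2u_{(0)}D\p_3$ and the successive $D$'s, into one factor of $\mathcal D$ on the left and one on the right of a single $D^{-1}$. Confirming that the two outer factors are genuinely $\mathcal D$ (and not merely its leading symbol) is the arithmetic heart of the proof; matching the lower-order coefficients $4u_{(0)}D+2u_{(1)}$ on both sides is the step most prone to sign and integration-by-parts errors.

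For the converse, I would start from a solution $\kappa(s,t)$ of \eqref{kappa-deltastorto-erre} and the null curve $\gamma$ with curvature $\kappa(\cdot,0)$. The differential function $\p_3$ determines, via \eqref{p1-p2} and the algebraic relations \eqref{p3}--\eqref{p5}, all of $\p_1,\dots,\p_6$ as functions of $\kappa(s,t)$; these are exactly the entries of the matrix $P(s,t)$. Since $\kappa$ solves the evolution equation, the relation \eqref{kappa-t} holds, and this is precisely the content of the flatness condition $\partial_t K-\partial_s P=[K,P]$ for the $\mathfrak e(2,1)$-valued form $K\,ds+P\,dt$, with $K$ built from $\kappa(s,t)$ as in \eqref{CE0}. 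I would then invoke the argument already used in the second half of Lemma \ref{l:tangent-vector}: the Maurer--Cartan equation \eqref{MCE1} being satisfied yields an $\P$-valued integral $\FR$, whose first column is a variation $\Gamma(s,t)$ through null curves realizing the prescribed local vector field, with $\Gamma(s,0)=\gamma$. Uniqueness follows because the ODE system \eqref{p1}--\eqref{p5} determines $P$ from $\p_3$ with no remaining freedom once the integration constant is normalized to zero, so the resulting flow is uniquely pinned down by the initial curve and the choice of $\p_3$.

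The main obstacle is the middle paragraph: verifying that the raw expression for $\partial_t\kappa$ factors cleanly through $D^{-1}$ as $-\tfrac14\,\mathcal D D^{-1}\mathcal D\,\p_3$. The existence of the intermediate primitive is guaranteed by \eqref{Ep3}, but showing that the operator on the right assembles into exactly the operators defining the bi-Hamiltonian structure of KdV requires a patient, coefficient-by-coefficient reconciliation rather than a soft argument.
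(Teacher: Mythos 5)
Your proposal is correct and follows essentially the same route as the paper: both extract the curvature evolution from the Maurer--Cartan flatness of the form $\mathfrak K\,ds+\mathfrak P\,dt$ built in Lemma \ref{l:tangent-vector}, use the hypothesis $E(u_{(1)}\p_3)=0$ to make sense of the primitive, and prove the converse by integrating the Maurer--Cartan equation via Cartan--Darboux. The only difference is organizational: the paper sidesteps your ``arithmetic heart'' by recording the structure equations in the form $\partial_t\kappa=\tfrac12\,\mathcal D\,\p_4$ and then observing that differentiating the expression for $\p_4$ gives $D\p_4=-\tfrac12\,\mathcal D\,\p_3$ in a single line, so the factorization $-\tfrac14\,\mathcal D D^{-1}\mathcal D\,\p_3$ is immediate rather than the outcome of a coefficient-by-coefficient reconciliation of $-\tfrac12 D\p_6-u_{(0)}\p_5$.
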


\begin{proof}
As above, the flow $\Gamma$ of the local vector field $V$
lifts to a map $\mathbf F : \R^2 \to \P$ and there exist
$\mathfrak e(2,1)$-valued polynomial differential functions
$$
 \mathfrak K=\left(
          \begin{array}{cccc}
            0 & 0 & 0 & 0 \\
            1 & 0 & -2u_{(0)} & 0 \\
            0 & 1 & 0 & -2u_{(0)} \\
            0 & 0 & 1 & 0 \\
          \end{array}
        \right),
\quad
\mathfrak P =\begin{pmatrix}
  0 & 0 & 0 & 0 \\
  \p_1 & \p_5 & \p_6 & 0\\
    \p_2 & \p_4 & 0 & \p_6\\
     \p_3 & 0 & \p_4 & -\p_5
      \end{pmatrix}
        $$
so that the $\mathfrak e(2,1)$-valued 1-form
\begin{equation}\label{m-c-local}
 \Theta =\mathbf F^{-1}d \mathbf F = \mathfrak K[\kappa] ds
  + \mathfrak P[\kappa]dt,
   \end{equation}
satisfies the Maurer--Cartan equation
$d\Theta  + \Theta  \wedge \Theta  =0$.
 Writing out this equation yields
\begin{equation}\label{LM1}
 \left\{ \begin{array}{lll}
  \p_1 = \frac{1}{2}D^2 \p_3+\int u_{(1)}\p_3,\\
     \p_2 = -D \p_3,\\
      \p_4 = -\frac{1}{2}D^2 \p_3 -2u_{(0)}\p_3+ \int u_{(1)}\p_3 ,\\
       \p_5=D \p_1+2u_{(0)}D \p_3,\\
        \p_6 = D \p_5 - 2u_{(0)}\p_4,
         \end{array}\right.
          \end{equation}
and
\begin{equation}\label{LM2}
 \frac{\partial \kappa}{\partial t} = \frac{1}{2} \mathcal D \p_4.
    \end{equation}
From the third equation of \eqref{LM1},
and the hypothesis that $E(u_{(1)}\p_3)=0$,
it follows that
\begin{equation}\label{d-p4=dstorto-p3}
  D \p_4 = - \frac{1}{2}\mathcal D \p_3,
  \end{equation}
and hence \eqref{LM2} can be written in the form
\[
 \frac{\partial \kappa}{\partial t} = -\frac{1}{4} \mathcal D D^{-1} \mathcal D \p_3.
   \]

Conversely, if $\k : \R^2 \to \R$ is a solution to equation
\eqref{kappa-deltastorto-erre}, then the
$\mathfrak e(2,1)$-valued 1-form $\Theta$ defined as in \eqref{m-c-local} satisfies the
Maurer--Cartan equation.
Thus, by the Cartan--Darboux theorem, there exists a map
$$
  \mathbf{F} = (\Gamma;\mathbf{a}) : \R^2 \to \P,
    $$
unique up to left multiplication by an element of $\P$, such that
$\mathbf{F}^{-1}d\mathbf{F}=\Theta$. Consequently,
$\Gamma : \R^2 \to \Re$ defines the flow of a local vector field.
\end{proof}

\begin{remark}
The curvature evolution of a local vector field is given by
a local evolution equation, that is
\[
 \frac{\partial \kappa}{\partial t} = \mathfrak c[\kappa],
  \]
where $\mathfrak c \in J[u]$ is a polynomial differential function.
Remarkably, the right hand side of \eqref{kappa-deltastorto-erre}
is expressed in terms of the two differential operators
defining the bi-Hamiltonian structure of the KdV equation (cf. \cite{O2}).
\end{remark}

\section{Dynamics of null curves and the KdV hierarchy}

\subsection{Motion by integrable evolution equations}

We now show that the evolution of the curvature induced by certain
motions is completely integrable. This is the content of the
following.

\begin{thm}\label{thm:kdv hier}
The sequence of polynomial differential functions
\begin{equation}\label{GP-sequence}
 \p_3^{[n]} = 4 \mathfrak g_{n-2},\quad n \geq 2,
   \end{equation}
defines a hierarchy of local motions whose curvature evolution equations
are the members of the KdV hierarchy.
\end{thm}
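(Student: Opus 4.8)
The plan is to substitute the prescribed sequence $\p_3^{[n]} = 4\mathfrak{g}_{n-2}$ into the general curvature evolution equation \eqref{kappa-deltastorto-erre} established in Theorem \ref{thm:local-vf}, and then show that the right-hand side collapses to $-D\mathfrak{g}_n[\kappa]$ (up to sign and the identification $u = \kappa$), which is precisely the $n$-th member \eqref{KdV-hierarchy} of the KdV hierarchy. The first thing I would verify is that $\p_3^{[n]}$ is an admissible choice, i.e. that it satisfies the hypothesis $E(u_{(1)}\p_3^{[n]}) = 0$ required for a local vector field. Since $\mathfrak{g}_{n-2} = E(\mathfrak{p}_{n-2})$ is an Euler--Lagrange expression (a gradient), this follows immediately from the basic fact recalled in the preliminaries that $u_{(1)}E(\mathfrak{w})$ is always a total derivative, hence annihilated by $E$. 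This confirms that \eqref{GP-sequence} indeed defines a genuine local motion for each $n \geq 2$.

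The core of the argument is then the algebraic simplification of $-\tfrac{1}{4}\mathcal{D}D^{-1}\mathcal{D}\,\p_3^{[n]}$. Substituting $\p_3^{[n]} = 4\mathfrak{g}_{n-2}$ gives $-\mathcal{D}D^{-1}\mathcal{D}\,\mathfrak{g}_{n-2}$. The key is the Lenard recursion \eqref{lenard-formula}, which states $D\mathfrak{g}_{n-1} = \mathcal{D}\mathfrak{g}_{n-2}$. Using this, the innermost factor $\mathcal{D}\mathfrak{g}_{n-2}$ becomes $D\mathfrak{g}_{n-1}$, so that
\[
 D^{-1}\mathcal{D}\,\mathfrak{g}_{n-2} = D^{-1}D\,\mathfrak{g}_{n-1} = \mathfrak{g}_{n-1}
\]
(the primitive being taken in the normalized sense of the preliminaries). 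Applying $\mathcal{D}$ once more and invoking the recursion a second time, $\mathcal{D}\mathfrak{g}_{n-1} = D\mathfrak{g}_{n}$, yields
\[
 -\tfrac{1}{4}\mathcal{D}D^{-1}\mathcal{D}\,\p_3^{[n]} = -\mathcal{D}\mathfrak{g}_{n-1} = -D\mathfrak{g}_n[\kappa].
\]
Thus \eqref{kappa-deltastorto-erre} becomes $\partial_t\kappa = -D\mathfrak{g}_n[\kappa]$, which is exactly the first (Hamiltonian) form of the $n$-th KdV equation in \eqref{KdV-hierarchy} under $u = \kappa$.

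The step I expect to require the most care is the handling of the operator $D^{-1}$ and the implicit integration constants. The formal cancellation $D^{-1}D = \mathrm{id}$ is only valid up to an additive constant, so I would check that the normalization conventions introduced in the preliminaries (the primitive $\int \mathfrak{w}\,ds$ vanishing at $\mathbf{u}=\mathbf{0}$, and the local-vector-field constant set to zero in the remark following \eqref{p1-p2}) make the identification $D^{-1}D\,\mathfrak{g}_{n-1} = \mathfrak{g}_{n-1}$ exact rather than merely formal. Concretely, I would confirm that $D\mathfrak{g}_{n-1}$ lies in the image of $D$ with the correct primitive, which is guaranteed because $\mathcal{D}\mathfrak{g}_{n-2}$ is manifestly a total derivative by the recursion. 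Once these bookkeeping points are settled, matching the resulting equation to \eqref{KdV-hierarchy} is immediate, and one can further spot-check the base cases $n=2,3$ against the explicit wave and KdV equations listed after \eqref{KdV-hierarchy} to confirm the indexing and sign conventions.
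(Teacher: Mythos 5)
Your proposal is correct and follows essentially the same route as the paper: check admissibility of $\p_3^{[n]}=4\mathfrak g_{n-2}$ via the fact that it is a variational derivative (so $u_{(1)}\p_3^{[n]}$ is a total derivative), then apply the Lenard recursion twice to collapse $-\tfrac14\mathcal D D^{-1}\mathcal D\,4\mathfrak g_{n-2}$ to $-\mathcal D\mathfrak g_{n-1}=-D\mathfrak g_n$. Your extra care with the $D^{-1}$ normalization and the base-case check are sensible refinements of the same argument, not a different proof.
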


\begin{proof}
Since any $\p_3^{[n]}$ in the sequence is the variational derivative of some
local functional, $\kappa_{(1)} \p_3^{[n]}$ is a
total derivative (cf. \cite{O3}, Theorem 7.36), or equivalently
$E(\kappa_{(1)} \p_3)$ $= 0$. Therefore, $\p_3^{[n]}$ determines a local vector field.
The result now follows from the curvature equation \eqref{kappa-deltastorto-erre}
and the Lenard recursion formula \eqref{lenard-formula} for the $\mathfrak g_n$'s.
For $n=2$,
\[
 \kappa_t = -\frac{1}{4}\mathcal D D^{-1} \mathcal D 4\,\mathfrak g_0 =
  -\mathcal D \mathfrak g_1 = -D \mathfrak g_2 = -6uu_s - u_{sss},
  \]
which is the KdV equation, the second member of the KdV hierarchy. For $n>2$,
\[
 \kappa_t = -\frac{1}{4}\mathcal D D^{-1} \mathcal D \,4 \mathfrak g_{n-2} =
  -\mathcal D \mathfrak g_{n-1} = -D \mathfrak g_n,
   \]
which is the $n^{th}$ member of the KdV hierarchy,

\end{proof}

\begin{remark}
Let $\mathcal M \simeq \E(2,1) \times C^\infty(\R,\R)$ be the space of null curves
parametrized by the natural parameter and defined on the whole $\R$.
Assume, for simplicity, that $C^\infty(\R,\R)$ is the space of
rapidly decreasing smooth functions, or of smooth periodic functions
of period $2\pi$.
Consider the space $\widehat{\mathcal M} \simeq \mathcal M/ \P \simeq C^\infty$
of ``geometric'' null curves (the space of curvature functions).
If we think of a tangent vector
$v_c \in T_\gamma \mathcal M$ as defined by a smooth function $c$ and
an element of the
Lie algebra $\mathfrak e(2,1)$, so that
$T_\gamma \mathcal M\simeq \mathfrak e(2,1) \times C^\infty$,
it is clear that two tangent vectors $v_c, v_{\tilde c} \in T_\gamma \mathcal M$
descend to the same element
of $T_{[\gamma]}\widehat{\mathcal M}$
if and only if $c = \tilde c$. This allows us to define a symplectic 2-form $\Omega$
on $\widehat{\mathcal M}$ by
\[
  \Omega(c, \tilde c) = \frac{1}{2}\int \left( \tilde c \, D^{-1} c
  -c \,D^{-1} \tilde c\right) ds.
     \]
If $\mathcal F = \int F(\kappa, \kappa_s, \kappa_{ss}, \dots) ds$ is a
\textit{local} functional on $C^\infty$,
then
\[
  \frac{d}{d\epsilon}_{\vert_{\epsilon =0}} \mathcal F(c+\epsilon \tilde c) =
   \left(\frac{\delta \mathcal F}{\delta \kappa}(c), \tilde c \right) =
     \Omega \left(D \frac{\delta \mathcal F}{\delta \kappa}(c), \tilde c
       \right),
         \]
where $(\,,\,)$ denotes the $L^2$ inner product
and ${\delta \mathcal F}/{\delta \kappa}$ is the variational derivative of
$\mathcal F$.
The symplectic gradient (Hamiltonian vector field)
of $\mathcal F$ is given by
$D \frac{\delta \mathcal F}{\delta \kappa}$, which defines
the Hamiltonian flow $\frac{\partial \kappa}{\partial t} =
D \frac{\delta \mathcal F}{\delta \kappa}$.
The corresponding Poisson bracket is given by
\[
   \{\mathcal F_1, \mathcal F_2\} =
     \Omega \left(D \frac{\delta \mathcal F_1}{\delta \kappa},
      D \frac{\delta \mathcal F_2}{\delta \kappa}
        \right) =
  \left(\frac{\delta \mathcal F_1}{\delta \kappa},
  D \frac{\delta \mathcal F_2}{\delta \kappa}   \right)
    =
   \int \frac{\delta \mathcal F_1}{\delta \kappa}
   D \frac{\delta \mathcal F_2}{\delta \kappa} ds.
      \]

A local vector field on $\mathcal M$ is said \textit{Hamiltonian}
with respect to a differential operator $\mathcal J$
if its curvature equation can be written in the form
\begin{equation}\label{motion-hamiltonian}
 \frac{\partial \kappa}{\partial t} = \mathcal
   J \frac{\delta \mathcal F}{\delta \kappa}[\kappa],
     \end{equation}
where ${\delta \mathcal F}/{\delta \kappa}$ is the variational derivative of
$\mathcal F = \int F(\kappa, \kappa_s, \kappa_{ss}, \dots) ds$
and $\mathcal J$ determines the Poisson bracket
$\{\mathcal F_1, \mathcal F_2\} = \int \frac{\delta \mathcal F_1}{\delta \kappa}
    \mathcal J \frac{\delta \mathcal F_2}{\delta \kappa} ds
      $
on the space of functionals.

Note that the local vector fields of Theorem
\ref{thm:kdv hier} are Hamiltonian with respect to the two
Hamiltonian structures $D$ and $\mathcal D$, which coincide with the
canonical Hamiltonian structures of the KdV.
\end{remark}

\subsection{Evolution by congruences and null worldlines}

A null curve that moves without changing its
shape (by Lorentz rigid motion) under the KdV flow
\begin{equation}\label{KdV}
 \k_t+\k_{sss}+6\k\k_s=0
  \end{equation}
is said a {\it congruence curve}. Congruence curves correspond to
the traveling wave solutions of the KdV equation. If $\k(s,t)= f
(s-\lambda t)$ is a traveling wave solution of (\ref{KdV}), then
\begin{equation}\label{KDV2}
 f'''+ 6ff' - \lambda f'=0.
  \end{equation}
Integrating twice,
we obtain
$$
  (h')^2=4h^3-g_2h-g_3,
   $$
for real constants $g_2$ and $g_3$, where
$$
  h = -\frac{1}{2}\left(f-\frac{\lambda}{6}\right).
  $$
Thus $f$ can be expressed in
terms of the Weierstrass $\wp$-function with invariants $g_2$ and
$g_3$. Quasi-periodic congruence curves may occur only if
the polynomial $p(x) = 4x^3-g_2x-g_3$ has three distinct real roots, i.e.
$$
  27g_3^2-g_2^3<0.
    $$
In this case, the periodic solution of (\ref{KDV2}) is
$$
  f(s)=-2\wp(s+\omega_3;g_2,g_3)+\frac{\lambda}{6},\quad s\in \R,
     $$
where $\omega_1$, $\omega_3$ are the primitive half-periods of
$\wp(\,\cdot\,;g_2,g_3)$ (cf. \cite{L}).

Now, the third order ODE (\ref{KDV2}) coincides with
the Euler-Lagrange equation
of the action functional on null curves defined by
\begin{equation}\label{KDV4}
 \int(2\k + \lambda)ds
   \end{equation}
(cf. \cite{FGL}, \cite{GM}). Thus, congruence curves are the
worldlines of the relativistic particle model defined by
(\ref{KDV4}). (For more details on the physical models associated
with action functionals of the type above see \cite{KP}, \cite{MN1},
\cite{MN2}, \cite{NMMK}, \cite{P}.)

More interestingly, let
$\k$ be a solution of (\ref{KDV2}) and let $\mathfrak K$ and $\mathfrak P$ be
as in \eqref{m-c-local}, where $\mathfrak P$ is computed for
$\p_3^{[2]} = 4\mathfrak g_0 = 2$, using \eqref{LM1}.
If we set
$$
 \mathfrak{L}_{\lambda} := \mathfrak{P}[\kappa]+\lambda \mathfrak{K}[\kappa],
  $$
then from the Maurer--Cartan equation of $\Theta =\mathfrak K ds +\mathfrak Pdt$,
which in turn is equivalent to
\[
 \frac{\partial \mathfrak P[\kappa]}{\partial s}
 -\frac{\partial \mathfrak K[\kappa]}{\partial t}
   = [\mathfrak P, \mathfrak K][\kappa],
   \]
it follows that
\begin{equation}\label{LP}
 \frac{\partial \mathfrak L_\lambda}{\partial s}
  = \left[\mathfrak{L}_{\lambda},\mathfrak{K}\right].
   \end{equation}
Thus, $\mathfrak{L}_{\lambda}$ and $\mathfrak{K}$ form a Lax pair
for the variational problem \eqref{KDV4}. Equation (\ref{LP}) means
that the linear endomorphism
$$
 \mu = \FR\cdot
   \mathfrak{L}_{\lambda}\cdot \FR^{-1}
    $$
is constant along the solutions. From this it
follows that the worldlines can be obtained by quadratures and
expressed in terms of Weierstrass $\sigma$, $\zeta$ and $\wp$
functions (cf. \cite{GM}).

Summarizing, we can state the following.

\begin{thm}\label{thm:cong}
The congruence solutions of the flow generated by $\p_3^{[2]}$
coincide with the worldlines of the particle model defined by
\eqref{KDV4}. In particular, the congruence curves are integrable
by quadrature.
\end{thm}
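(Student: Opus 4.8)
The plan is to show that the two families of curves in the statement---the congruence solutions of the flow generated by $\p_3^{[2]}$ and the extremals of the functional \eqref{KDV4}---are each characterized by one and the same third order ODE for the curvature, namely \eqref{KDV2}, and then to conclude that they coincide by appealing to the fundamental theorem for null curves, according to which a null curve is determined by its curvature up to an element of $\P$. On the congruence side, recall that by Theorem \ref{thm:kdv hier} the flow generated by $\p_3^{[2]}=4\mathfrak g_0$ induces the KdV equation \eqref{KdV} on $\kappa$. A curve moves without changing shape exactly when $\Gamma(\cdot,t)=g(t)\cdot\Gamma(\cdot,0)$ with $g(t)\in\P$; since the curvature is Lorentz invariant and the natural parameter is preserved by $\P$, this forces $\kappa(s,t)=f(s-a(t))$. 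Substituting this ansatz into \eqref{KdV} shows that $a(t)$ must be linear, $a(t)=\lambda t$, and that the profile $f$ satisfies precisely \eqref{KDV2}; conversely, every solution of \eqref{KDV2} yields, through the traveling-wave ansatz, a KdV evolution of constant shape, i.e.\ a congruence curve.

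I would then identify the worldlines, which by definition are the extremals of $\mathcal L_\lambda=\int(2\kappa+\lambda)\,ds$. To compute the first variation I would use Lemma \ref{l:tangent-vector} to describe admissible variations through the single free function $p_3$, and combine the induced curvature variation \eqref{kappa-t} with the variation of the pseudo-arc element. After integration by parts the curvature part $\int 2\kappa\,ds$ contributes $\kappa_{sss}+6\kappa\kappa_s$ to the gradient while the length term $\lambda\int ds$ contributes $-\lambda\kappa_s$, so that the Euler--Lagrange equation reads $\kappa_{sss}+6\kappa\kappa_s-\lambda\kappa_s=0$, which is exactly \eqref{KDV2} (cf.\ \cite{FGL}, \cite{GM}). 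Since the worldlines and the congruence curves then have curvatures solving the same ODE, they coincide as null curves up to $\P$, which proves the first assertion.

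For integrability by quadrature I would exploit the Lax pair already produced in \eqref{LP}. With $\mathfrak L_\lambda=\mathfrak P[\kappa]+\lambda\mathfrak K[\kappa]$, where $\mathfrak P$ is evaluated for $\p_3^{[2]}=2$, the Maurer--Cartan equation for $\Theta=\mathfrak K\,ds+\mathfrak P\,dt$ together with the traveling-wave identity $\partial_t\mathfrak K=-\lambda\,\partial_s\mathfrak K$ gives $\partial_s\mathfrak L_\lambda=[\mathfrak L_\lambda,\mathfrak K]$. Using $\partial_s\FR=\FR\mathfrak K$ one then checks directly that $\mu=\FR\,\mathfrak L_\lambda\,\FR^{-1}$ is independent of $s$ along any extremal, so that $\mu$ is a constant of motion. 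Writing $\mathfrak L_\lambda=\FR^{-1}\mu\FR$ and integrating the resulting first order system reconstructs the worldline by quadratures, which can be expressed explicitly through the Weierstrass $\sigma$, $\zeta$ and $\wp$ functions as in \cite{GM}.

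The step I expect to be the main obstacle is the first-variation computation for the reparametrization-invariant functional \eqref{KDV4}. Because its integration element is the curve-dependent pseudo-arc length, a variation that keeps the natural parametrization fixed fails to detect the $\lambda$-term, and one must carefully account for the variation of $ds$---equivalently, treat the condition $\|\gamma''\|^{1/2}=1$ as a constraint---in order to recover the full Euler--Lagrange equation \eqref{KDV2}. Once this is settled, the coincidence of the two families and the reduction to quadratures follow from the structure-equation bookkeeping described above.
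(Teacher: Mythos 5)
Your proposal is correct and follows essentially the same route as the paper: traveling-wave reduction of the induced KdV flow to \eqref{KDV2}, identification of \eqref{KDV2} with the Euler--Lagrange equation of \eqref{KDV4}, and integrability via the Lax pair $\mathfrak L_\lambda=\mathfrak P[\kappa]+\lambda\mathfrak K[\kappa]$ and the conserved endomorphism $\mu=\FR\,\mathfrak L_\lambda\,\FR^{-1}$. The only difference is the step you flag as the main obstacle --- the first variation of the reparametrization-invariant functional \eqref{KDV4} --- which the paper does not carry out but delegates to \cite{FGL} and \cite{GM}; your sketch of that computation lands on the same equation, so nothing is missing relative to the paper's own argument.
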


\begin{remark}
More generally, a local motion is \textit{by congruences}
if the curves of the motion do not change their shapes during the evolution.
This means that the curvature of the motion is a traveling wave solution
of equation \eqref{kappa-deltastorto-erre}, i.e., $\kappa(s,t) = f(s-\lambda t)$, for
some constant $\lambda$, and $f$ is a solution to
\begin{equation}\label{stationary-eq}
4\lambda f'
  - \mathcal D D^{-1} \mathcal D \p_3[f] = 0.
 \end{equation}
In this case, if we set
$$
 \mathfrak{L}_{\lambda} := \mathfrak{P}[f]+\lambda \mathfrak{K}[f],
  $$
then the Maurer--Cartan equation of $\Theta =\mathfrak K ds +\mathfrak Pdt$
can be written in Lax form
\[
 (\mathfrak L_\lambda)'
  = \left[\mathfrak{L}_{\lambda},\mathfrak{K}\right].
   \]
Unlike the local motion discussed in Theorem \ref{thm:cong}, in general
the Lax formulation does not imply the integration by quadratures of
congruence curves.

\end{remark}

\subsection{Evolution by similarities} Consider a null curve $\gamma:I\to \Re$
parametrized by the natural parameter, and let $\k_\gamma$ be its curvature. Then
$$
 \widetilde{\gamma}:s\in \sqrt{r}I\to r\gamma(\frac{s}{\sqrt{r}})\in \Re
  $$
is the natural parametrization of $r\gamma$ and its curvature is given by
$$
  \widetilde{\k}_{\tilde \gamma}(s)=\frac{1}{r}\k_\gamma(\frac{s}{\sqrt{r}}).
   $$
Thus, solutions of (\ref{KdV}) which corresponds to curves whose shapes
evolves by similarities under the KdV flow are in the form
$$
  \k(s,t)=\frac{1}{r(t)}\k_\gamma \left(\frac{s}{\sqrt{r(t)}} \right)
   $$
where $r$ is a positive smooth function. Setting $x=s/\sqrt{r(t)}$ we have
$$
  2\k_\gamma'''(x)-x\sqrt{r(t)}\dot{r}(t)\k_\gamma'(x)+2\k_\gamma(x)\left(6\k_\gamma'(x)
   -\sqrt{r(t)}\dot{r}(t)\right)=0.
   $$
This implies
$$
  \frac{d}{dt}\left(\sqrt{r}\dot{r}\right)|_t(xk_\gamma'(x)+2\k_\gamma(x))=0.
   $$
Excluding the trivial case $\k_\gamma(s)=-2s^{-2}$ (which corresponds to a stationary solution),
 we obtain
$$
  r(t)=(at+b)^{2/3}.
    $$
The resulting third order ODE for $\k_\gamma$ is
\begin{equation}\label{SS}
  \k_\gamma'''+6\k_\gamma\k_\gamma'-\frac{a}{3}(x\k_\gamma'+2\k_\gamma)=0.
     \end{equation}
Without loss of generality we may assume $a=1$. Thus, (\ref{SS}) can be integrated by
setting
$$
  \k_\gamma=v'-v^2,
    $$
where $v$ is a solution of the {\it second Painlev\'e equation}
$$
   v''-2v^3+xv+c=0.
     $$
This explains the geometrical origin of the similarity reduction of the KdV
equation considered in \cite{JO}. We have proved the following.

\begin{thm}\label{thm:sim}
The curvature of the similarity solutions corresponding to the
flow generated by $\p_3^{[2]}$
can be integrated by means of the solutions of the second
Painlev\'e equation.
\end{thm}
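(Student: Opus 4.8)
The plan is to exploit the scaling symmetry of the KdV flow generated by $\p_3^{[2]}$, which by Theorem \ref{thm:kdv hier} is equation \eqref{KdV}, reducing the evolution to an ordinary differential equation that a Miura-type substitution links to the second Painlev\'e equation. First I would record how a homothety acts on a null curve: scaling $\gamma \mapsto r\gamma$ multiplies the natural parameter by $\sqrt r$ and the curvature by $1/r$, so a curve whose shape is preserved up to similarity along the flow must have curvature of the self-similar form $\k(s,t) = r(t)^{-1}\k_\gamma\bigl(s/\sqrt{r(t)}\bigr)$ for a positive function $r$. Substituting this ansatz into \eqref{KdV}, with $x = s/\sqrt{r(t)}$, and expressing $\k_t$, $\k_s$, $\k_{sss}$ through the $x$-derivatives of $\k_\gamma$ together with $r$ and $\dot r$, the equation separates: the entire $t$-dependence is carried by $\sqrt r\,\dot r$, and compatibility forces $\tfrac{d}{dt}\bigl(\sqrt r\,\dot r\bigr) = 0$. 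Discarding the degenerate stationary profile $\k_\gamma = -2s^{-2}$, this yields $r(t) = (at+b)^{2/3}$ and leaves the third-order equation \eqref{SS} for $\k_\gamma$, which I normalize by setting $a = 1$.

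The decisive step is to integrate \eqref{SS} in terms of Painlev\'e II. I would not try to integrate \eqref{SS} directly: its left-hand side equals $\tfrac{d}{dx}\bigl[\k_\gamma'' + 3\k_\gamma^2 - \tfrac13 x\k_\gamma\bigr] - \tfrac13\k_\gamma$, so it fails to be a total derivative, the obstruction being the stray linear term $-\tfrac13\k_\gamma$. Instead I would introduce the Miura-type substitution $\k_\gamma = v' - v^2$. The key structural fact is that this map intertwines the KdV and mKdV flows and therefore carries the self-similar reduction of one to that of the other: whereas \eqref{SS} is the scaling reduction of KdV, the corresponding scaling reduction of mKdV \emph{does} integrate once, the stray term now being absorbed, to precisely $v'' - 2v^3 + xv + c = 0$, with the integration constant $c$ serving as the Painlev\'e parameter. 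Consequently, every solution $v$ of the second Painlev\'e equation produces, via $\k_\gamma = v' - v^2$, a solution of \eqref{SS}, hence the curvature of a similarity solution of the flow generated by $\p_3^{[2]}$.

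The step I expect to be the main obstacle is exactly this reduction: one must verify that inserting $\k_\gamma = v' - v^2$ converts the left-hand side of \eqref{SS} into $(\partial_x - 2v)$ applied to the integrated mKdV profile, so that a solution of Painlev\'e II automatically solves \eqref{SS}. This is a finite but delicate computation, and the careful part is the bookkeeping of the scaling exponents and of the constants required to land on the normalized form $v'' - 2v^3 + xv + c = 0$, rather than on a variant carrying a $\tfrac13$-coefficient in the linear term. Once this identity is established the theorem follows, and it simultaneously clarifies the geometric origin of the similarity reduction of the KdV equation studied in \cite{JO}.
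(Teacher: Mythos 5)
Your proposal follows essentially the same route as the paper: the scaling ansatz $\k(s,t)=r(t)^{-1}\k_\gamma\bigl(s/\sqrt{r(t)}\bigr)$, the separation argument forcing $\tfrac{d}{dt}(\sqrt r\,\dot r)=0$ and hence $r=(at+b)^{2/3}$ after discarding $\k_\gamma=-2s^{-2}$, the reduction to \eqref{SS}, and the Miura-type substitution $\k_\gamma=v'-v^2$ landing on the second Painlev\'e equation. The one delicate point you flag --- reconciling the $a/3$ coefficient in \eqref{SS} with the normalized form $v''-2v^3+xv+c=0$ --- is likewise left implicit in the paper (an additional rescaling of $x$ and $\k_\gamma$ is indeed required), so your outline is faithful to the published argument.
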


\bibliographystyle{amsalpha}

\end{document}